\documentclass[12pt]{amsart}
\usepackage{lipsum}
\usepackage{amssymb}
\usepackage{enumerate}
\usepackage{amsthm}
\usepackage{tikz-cd}
\usepackage{amsmath}
\usepackage[mathscr]{euscript}
\usepackage[utf8]{inputenc}
\usepackage[english]{babel}
\usepackage{hyperref}
\hypersetup{
    colorlinks=true,
    linkcolor=blue,
    citecolor=black,
    filecolor=red,      
    urlcolor=violet,
}

\makeatletter
\@namedef{subjclassname@2020}{%
  \textup{2020} Mathematics Subject Classification}
\makeatother
\newtheorem{thm}{Theorem}[section]
\newtheorem{lem}[thm]{Lemma}

\newtheorem{defi}[thm]{Definition}

\newtheorem{qn}[thm]{Question}

\DeclareMathOperator{\rk}{{rk}}

\begin{document}
\baselineskip=16pt

\subjclass[2020]{Primary 14J60}
\keywords{Semistability; Syzygy bundle.}

\author{Snehajit Misra}

\address{Department of Mathematics and Computing, Indian Institute of Technology (Indian School of Mines) IIT (ISM) Dhanbad - 826004, Jharkhand, India.}
\email[Snehajit Misra]{misra08@gmail.com, snehajitm@iitism.ac.in}

\begin{abstract}
 In this article, we investigate the instability of syzygy bundles corresponding to globally generated vector bundles on smooth irreducible projective surfaces under change of polarization.
\end{abstract}

\title{On instability of Syzygy Bundles}

\maketitle

\section{Introduction}
Let $X$ be a smooth irreducible projective variety of dimension $n$ on an algebraically closed field $k$, and let $E$ be a globally generated vector bundle of rank $r$ on $X$. 
The syzygy bundle or the kernel bundle associated with $E$, denoted by $M_{E}$ is defined as the kernel of the evaluation map $$ev : H^0(X,E)\otimes \mathcal{O}_X\longrightarrow E.$$ Thus we have the following short exact sequence:
\begin{center}
 $0\longrightarrow M_{E}\longrightarrow H^0(X,E) \otimes \mathcal{O}_X\longrightarrow E\longrightarrow 0.$
\end{center}
The question of (semi)stability or instability of $M_{E,V}$ is a long standing problem (see Definition \ref{defi3.1} for semistability) as these vector bundles arise in a variety of geometric and algebraic
problems.  This has major implications in higher rank Brill-Noether theory.

For example, the stability of these bundles $M_E$ has been studied in \cite{B94} when $E$ is a globally generated vector bundle of arbitrary rank $r$ on a smooth complex projective curve $C$. In particular, it is shown that if $E$ is semistable globally generated vector bundle and $\mu(E)\geq 2g(C)$, where $g(C)$ is the genus of the curve $C$, then $M_E$ is semistable. Furthermore, if $E$ is a globally generated stable vector bundle on a smooth curve $C$ under the hypothesis $\mu(E)\geq 2g(C)$, then $M_E$ is stable unless $\mu(E) = 2g(C)$ and $C$ is hyperelliptic or $\Omega_C\hookrightarrow E$. 

In \cite{ELM13}, the authors
study the stability of $M_L$  when $L$ is a very ample line bundle on a smooth complex projective surface $X$. In fact, they showed the following: if we fix an ample divisor $H$ on $X$ and an arbitrary divisor $P$ on the algebraic surface $X$ and given a large integer $d$, set
$$L_d = dH + P$$
and write $M_d = M_{L_d}$, then for sufficiently large $d$, the syzygy bundle $M_d$ is slope stable with respect
to ample divisor $H$. Ein-Lazarsfield-Mustopa  also conjectured the following :  Let $X$ be a smooth projective variety of dimension $n$, and define $M_d$ as above. Then $M_d$ is $H$-stable for every $d\gg 0$. The authors also asked the question about the behavior regarding stability of $M_L$ under change of polarization.
This above mentioned conjecture due to Lazarsfeld et al. is solved in \cite{R24} (see \cite[Theorem 4.3, Corollary 4.4]{R24}.) There are several other works, in the years, where stability of syzygy
bundles has been used, with various purposes. See also \cite{B08},\cite{C11},
\cite{FM23},\cite{DHJS25} for related works.

\vspace{2mm}

Though there has been a considerable amount of literature on the (semi)stability of the syzygy bundle corresponding to globally generated line bundles, not much is known for the case of syzygy bundles corresponding to  higher rank globally generated vector bundles $E$ on higher dimensional smooth varieties.

Recently, the stability of $M_E$ has been studied for stable vector bundles $E$ of higher rank on a smooth projective surface $X$.
For any vector
bundle $E$ on a smooth polarized variety $(X,H)$ and $m > 0$, we denote $E(m) := E \otimes \mathcal{O}_X(mH)$. The authors in \cite{BP23} prove that if $E$ is an $H$-stable vector bundle on a smooth projective surface $X$, then $M_{E(m)}$ is $H$-stable for $m\gg 0$.
However no such effective result is known for stability of syzygy bundles corresponding to stable globally generated vector bundles on smooth surfaces.
\vspace{2mm}
 More generally, a sufficient condition is given for semistability of syzygy bundles $M_E$ corresponding to ample and globally generated vector bundle $E$ on smooth surfaces in \cite[Theorem 3.2]{MN26}.

 As a corollary to the above mentioned theorem (i.e. \cite[Theorem 3.2]{MN26}), the authors proved the following result extending the result in \cite{BP23} to semistable globally generated vector bundles on smooth surfaces : If $E$ is an $H$-semistable vector bundle on a smooth projective surface $X$, then $M_{E(m)}$ is $H$-stable for $m\gg 0$. In this article, we try to answer the following question :

 \begin{qn}
Let $X$ be a smooth projective variety and $E$ be a globally generated vector bundle on $X$. How does the behavior of (semi)stability changes of the corresponding  syzygy bundles $M_E$ under change of polarization? 
 \end{qn}

 The above question has been answered in \cite{DHJS25} for syzygy bundles $M_L$ corresponding to ample line bundles $L$ on toric surfaces. In particular, it has been shown that if $L=\mathcal{O}_X(D)$ is an ample line bundle on a smooth toric surface $X$ apart from $\mathbb{P}^2$ or $\mathbb{P}^1\times \mathbb{P}^1$, then there exists a polarization $A$ such that $M_{\mathcal{O}_X(dD)}$ is not stable with respect to $A$  for $d\gg 0$.  In this article, our main result is the following: 
 \begin{thm}
    Let $X$ be a smooth surface of Picard number at least 3 having an effective cone generated by curves $C_1,C_2,\cdots, C_m$ with mutual intersection multiplicity of at most 1. Then for every vector bundle $E$ with its determinant bundle $D=\det(E)$ ample, there exists an ample polarisation $A$ such that for $d\gg 0$ the syzygy bundle $M_{E(d)}$ is not stable with respect to $A$.
    \end{thm}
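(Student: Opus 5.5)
The plan is to produce an explicit destabilizing subsheaf of $M_{E(d)}$ coming from one of the generating curves $C=C_i$. Write $F=E(d)=E\otimes\mathcal{O}_X(dH)$, where $H$ is the fixed ample class used for twisting; one may take $H=D$. Since $F(-C)$ is globally generated for $d\gg0$, the inclusion $H^0(F(-C))\otimes\mathcal{O}_X\hookrightarrow H^0(F)\otimes\mathcal{O}_X$ induces an injection
$$M_{F(-C)}\hookrightarrow M_F.$$
Its slopes are
$$\mu_A(M_F)=-\frac{c_1(F)\cdot A}{h^0(F)-r},\qquad \mu_A(M_{F(-C)})=-\frac{(c_1(F)-rC)\cdot A}{h^0(F(-C))-r}.$$
To show $M_F$ is not $A$-stable, it suffices to show $\mu_A(M_{F(-C)})\ge\mu_A(M_F)$ for a suitable $A$ and $C$. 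This subsheaf has smaller rank, since $h^0(F)-h^0(F(-C))\approx h^0(F|_C)>0$.

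\textbf{Asymptotic comparison.} By Riemann--Roch, $h^0(F)=\tfrac{r}{2}d^2H^2+O(d)$, and $h^0(F)-h^0(F(-C))=r\,d\,H\cdot C+O(1)$ (vanishing of $H^1(F(-C))$ for $d\gg0$). Also $c_1(F)\cdot A=rd\,H\cdot A+D\cdot A$. Compare the relative changes of numerator and denominator. The inequality $\mu_A(M_{F(-C)})\ge \mu_A(M_F)$ becomes, to leading order in $1/d$,
$$\frac{C\cdot A}{H\cdot A}\;\ge\;\frac{2\,H\cdot C}{H^2}.$$
If this holds strictly, it survives the $O(1/d^2)$ corrections for $d\gg0$. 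So the task reduces to choosing $C$ and an ample $A$ with strict inequality.

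\textbf{Choice of $C$ and $A$.} Write $H=\sum_j a_jC_j$ with $a_j\ge0$, which is possible since $H$ is effective. Then $H^2=\sum_j a_j\,(H\cdot C_j)$. Because the Picard number is at least $3$, there are $m\ge3$ generators. Hence some index $i$ has
$$a_i\,(H\cdot C_i)\le H^2/m<H^2/2.$$
Take $C=C_i$. Next I would use the hypothesis $C_j\cdot C_k\le1$ to show that the nef cone, being dual to the effective cone, has a face dual to the generators other than $C_i$. Concretely, I want a nonzero nef class $N$ with $N\cdot C_j=0$ for all $j\ne i$ and $N\cdot C_i>0$. For such $N$ we get $H\cdot N=a_i\,C_i\cdot N$, so
$$\frac{C_i\cdot N}{H\cdot N}=\frac{1}{a_i}>\frac{2\,H\cdot C_i}{H^2}.$$
Now take $A=N+\epsilon H$ with $\epsilon>0$ small. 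This $A$ is ample, and by continuity the strict inequality persists for it. Note that $a_i>0$ is forced here, since otherwise $H\cdot N=0$, contradicting that $H$ is ample.

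\textbf{Main obstacle.} The hard step is the existence of the nef class $N$ vanishing on all generators but one. If the generators can be chosen with $\{C_j\}_{j\ne i}$ spanning a hyperplane, the dual face is a ray, but in general this requires the combinatorics of the cone. I would first try to exploit the intersection bound $C_j\cdot C_k\le1$, together with negativity of the extremal curves (each $C_j^2<0$ when $\rho\ge3$), to show that the effective cone is simplicial locally at each generator. If this fails, the fallback is a weaker choice. I would take $N$ on the face of the nef cone dual to a single generator $C_k$ with $k\ne i$, and redo the estimate with the averaging argument run only over generators that $N$ sees. The $O(1/d)$ bookkeeping in the Riemann--Roch step is routine.
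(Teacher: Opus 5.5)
Your asymptotic reduction is correct. With $H=D$ it says $D^2(C\cdot A)-2(D\cdot A)(D\cdot C)>0$, and this is exactly the inequality the paper's final estimate verifies. The factor $(2r-1)$ in Theorem~\ref{thm-main} comes from a sign slip in the $c_2(E(d))$ term; your leading coefficient $\tfrac{r^2}{2}$ is the right one.

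The gap is in the choice of $C$ and $A$. Your main route needs a nonzero nef $N$ with $N\cdot C_j=0$ for all $j\neq i$, i.e.\ all generators except $C_i$ lie on a single facet of the effective cone. This is a global condition. It holds when the cone is simplicial, and there your averaging argument does work. But the hypotheses do not imply it, and local simpliciality at $C_i$ or negativity of the $C_j^2$ would not give it anyway. For example, on the blow-up of $\mathbb{P}^2$ at three non-collinear points ($\rho=4$), the effective cone is generated by the six $(-1)$-curves $E_1,E_2,E_3,L-E_a-E_b$, whose pairwise intersections are $0$ or $1$. A direct check shows any class orthogonal to five of them is zero, so no such $N$ exists for any $i$. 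Your fallback is not yet an argument. It does not say which $N$ on the face dual to $C_k$ to take, or why the inequality would then hold. Note also that your averaging never uses $C_j\cdot C_k\le 1$ or $C_j^2<0$, and this is exactly the step where the paper uses them.

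The missing idea is the paper's specific nef class. Take $C=C_j$ with $D\cdot C_j$ minimal, and let $t_0$ be the largest $t$ with $D-tC_j$ nef. Then $t_0$ is positive because $D$ is ample, and finite because $(D-tC_j)^2\to-\infty$. Since $(D-t_0C_j)\cdot C_j=D\cdot C_j-t_0C_j^2>0$, the class $A_0=D-t_0C_j$ is orthogonal to some $C_i$ with $i\neq j$. Then $D\cdot C_i=t_0\,(C_i\cdot C_j)>0$ forces $C_i\cdot C_j=1$, hence $t_0=D\cdot C_i\ge D\cdot C_j$. Using $C_j^2\le -1$,
\[
D^2(C_j\cdot A_0)-2(D\cdot A_0)(D\cdot C_j)=-D^2(D\cdot C_j)-t_0C_j^2D^2+2t_0(D\cdot C_j)^2\ge D^2(t_0-D\cdot C_j)+2t_0(D\cdot C_j)^2>0 .
\]
By continuity the strict inequality survives for the ample class $A=D-(t_0-\varepsilon)C_j$ with $\varepsilon>0$ small. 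Combined with your asymptotic comparison and the inclusion $M_{E(d)\otimes\mathcal{O}_X(-C_j)}\hookrightarrow M_{E(d)}$, this finishes the proof. In the example above, with $D=-K_X$ and $C=E_1$, it gives $A_0=3L-2E_1-E_2-E_3$, and the quantity equals $2$.
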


    We closely follow the ideas in \cite{DHJS25}.

\section{Notations and Conventions}
We work over the field of complex numbers $\mathbb{C}$. Given a coherent
sheaf $\mathcal{G}$ on a variety $X$, we write $h^i(\mathcal{G})$ to denote the dimension of the $i$-th cohomology group $H^i(X,\mathcal{G})$. The sheaf $K_X$ will denote the canonical sheaf on $X$. In this article, all the varieties are assumed to be irreducible.  Also, throughout this article, (semi)stability means slope (semi)stability. The $i$-th Chern class of a vector bundle $E$ will be denoted by $c_i(E)$. The rank of a vector bundle $E$ will be denoted by rk$(E)$.
\section{Main results}
Let $X$ be a smooth complex projective variety of dimension $n$ with a fixed ample line bundle $H$ on it.
For a non-zero vector bundle $V$ of rank $r$ on $X$, the $H$-slope of $V$ is defined as
\begin{align*}
\mu_H(V) := \frac{c_1(V)\cdot H^{n-1}}{r} \in \mathbb{Q},
\end{align*}
where $c_1(V)$ denotes the first Chern class of $V$.
\begin{defi}\label{defi3.1}
A vector bundle $V$ on $X$ is said to be $H$-semistable (respectively stable) if $\mu_H(W) \leq \mu_H(V)$ (respectively $\mu_H(W) < \mu_H(V)$) for all subsheaves $W$ of $V$. A vector bundle $V$ on $X$ is called $H$-unstable if it is not $H$-semistable.
\end{defi}
See \cite{HL10} for more details about semistability.
\vspace{1mm}

Recall the short exact sequence for a globally generated vector bundle $E$.
\begin{center}
 $0\longrightarrow M_{E}\longrightarrow H^0(X,E)\otimes \mathcal{O}_X\longrightarrow E\longrightarrow 0.$
\end{center}

Hence, the $H$-slope of a syzygy bundle $M_E$ associated to a globally generated vector bundle $E$ is as follows :
$$\mu_H(M_E) = \frac{c_1(M_E)\cdot H^{n-1}}{\rk(M_E)} = \frac{-c_1(E)\cdot H^{n-1}}{h^0(E)-\rk(E)}.
$$
\begin{lem}\label{lem1}
    Let $E$ be a vector bundle on a smooth projective variety $X$, and $F$ be a subbundle of $E$. If both $E$ and $F$ are globally generated, then $M_F$ is a subbundle of $M_E$.
    \begin{proof}
        The result follows from the following commutative diagram : 
         \begin{center}
\begin{tikzcd} 
0 \arrow[r, " "] &M_F\arrow[r, " "] \arrow[d, " "] 
& \arrow[r, " "] H^0(X,F) \otimes \mathcal{O}_X \arrow[r, " "] \arrow[d," "] 
& F \arrow[r, "  "] \arrow[d," "] & 0  \\
0 \arrow[r, " "] & M_E  \arrow[r, " " ] \arrow[d, " "] & H^0(X,E) \otimes \mathcal{O}_X \arrow[r, " "] \arrow[d," "]
&E \arrow[r, " "] \arrow[d," "] & 0 \\
 & 0 & 0 & 0 
\end{tikzcd}
\end{center}
    \end{proof}
\end{lem}

\begin{thm}\label{thm-main}
    Let $(X,A)$ be a polarized surface and $E$ be a vector bundle of rank $r$ on $X$ such that $D=\det(E)$ is ample. Let $S$ be an effective divisor and $d$ be a positive integer such that $$E(d) = E\otimes \mathcal{O}_X(dD),$$ and $E(d)$ and $E(d)\otimes \mathcal{O}_X(-S)$ are globally generated. Then 
    \begin{enumerate}
 \item If $$(2r-1)(D^2)(S\cdot A)  - 2(D\cdot A)(D\cdot S) >0,$$
        then $$\mu_A\bigl(M_{E(d)\otimes \mathcal{O}_X(-S)}\bigr) > \mu_A\bigl(M_{E(d)}\bigr)$$ for $d\gg0$.
        \item If $$(2r-1)(D^2)(S\cdot A)  - 2(D\cdot A)(D\cdot S) \geq 0$$ and
        $$d>-\dfrac{B}{C} $$ where $$B = r\bigl[(2r-1)(S\cdot A) (D^2)-2(D\cdot A)(D\cdot S)\bigr]+\dfrac{r^2}{2}\bigl[\bigl( K_X\cdot S)+S^2\bigr)(D\cdot A)-(D\cdot K_X)(S\cdot A)\Bigr]$$\\
and $$C=\bigl[\dfrac{r}{2}(S\cdot A)(D^2) -(D\cdot A)(D\cdot S)\bigr]+\dfrac{r}{2}\bigl[\bigl( K_X\cdot S)+S^2\bigr)(D\cdot A)-(D\cdot K_X)(S\cdot A)\Bigr] + $$
\hspace{3cm} $r^2\bigl( \chi(\mathcal{O}_X)-1\bigr) + r\,c_2(E)(S\cdot A);$\\

then  $$\mu_A\bigl(M_{E(d)\otimes \mathcal{O}_X(-S)}\bigr) > \mu_A\bigl(M_{E(d)}\bigr).$$
    \end{enumerate}
    \begin{proof}
        Recall that for any globally generated vector bundle $E$ of rank $r$ on $X$, we have $$\mu_A(M_E) = -\dfrac{c_1(E)\cdot A}{h^0(E)-r}$$
        Thus 
        \begin{align}\label{s1}
        \mu_A\bigl(M_{E(d)}\bigr) = - \dfrac{c_1(E(d))\cdot A}{h^0(E(d))-r}
        \end{align}
        and 
        \begin{align}\label{s2}
            \mu_A\bigl(M_{E(d)\otimes \mathcal{O}_X(-S)}\bigr) =-\dfrac{c_1\bigl(E(d)\otimes \mathcal{O}_X(-S)\bigr)\cdot A}{h^0\bigl(E(d)\otimes \mathcal{O}_X(-S)\bigr)-r}
        \end{align}
        Therefore 
        $$  \mu_A\bigl(M_{E(d)\otimes \mathcal{O}_X(-S)}\bigr) -  \mu_A\bigl(M_{E(d)}\bigr) \\
        = \dfrac{c_1(E(d))\cdot A}{h^0(E(d))-r} - \dfrac{c_1\bigl(E(d)\otimes \mathcal{O}_X(-S)\bigr)\cdot A}{h^0\bigl(E(d)\otimes \mathcal{O}_X(-S)\bigr)-r} $$
        \begin{align}\label{s3}
=\dfrac{\Bigl[\bigl\{c_1(E(d))\cdot A\bigr\}\bigl\{ h^0\bigl(E(d)\otimes \mathcal{O}_X(-S)\bigr)-r \bigr\}\Bigr] - \Bigr[\bigl\{c_1\bigl(E(d)\otimes \mathcal{O}_X(-S)\bigr)\cdot A\bigr\}\bigl\{ h^0(E(d))-r  \bigr\}\Bigr]}{\bigl\{ h^0(E(d))-r\bigr\}\bigl\{ h^0\bigl(E(d)\otimes \mathcal{O}_X(-S)\bigr)-r\bigl\}}
        \end{align}
        Hence $$ \mu_A\bigl(M_{E(d)\otimes \mathcal{O}_X(-S)}\bigr) -  \mu_A\bigl(M_{E(d)}\bigr) > 0 $$
        if and only if 
        \begin{align}\label{s4}
\Bigr[\bigl\{c_1(E(d))\cdot A\bigr\}\bigl\{ h^0\bigl(E(d)\otimes \mathcal{O}_X(-S)\bigr)-r \bigr\}\Bigr] - \Bigr[\bigl\{c_1\bigl(E(d)\otimes \mathcal{O}_X(-S)\bigr)\cdot A\bigr\}\bigl\{ h^0(E(d))-r  \bigr\}\Bigr] >0.
\end{align}
Now\begin{align*}
    & c_1\bigl(E(d)\otimes \mathcal{O}_X(-S)\bigr)\\
    &  = c_1(E(d))+rc_1(\mathcal{O}_X(-S)) 
\end{align*} 
and \begin{align*}
    & c_2\bigl(E(d)\otimes \mathcal{O}_X(-S)\bigr)\\
    & = c_2(E(d))+(r-1)c_1(E(d))\cdot c_1(\mathcal{O}_X(-S))+\binom{r}{2}c_1(\mathcal{O}_X(-S))\cdot c_1(\mathcal{O}_X(-S))
    \end{align*}
    \end{proof}
\end{thm}
Choose $d\gg 0$ so that $h^i(E(d))=0$ for $i=1,2$.

Now by Riemann-Roch theorem for vector bundles on surfaces, we have
$$h^0(E(d)) = r\chi(\mathcal{O}_X)+\dfrac{1}{2}c_1(E(d))\cdot\bigr(c_1(E(d))-K_X\bigr) -c_2(E(d)),$$
and\\

$h^0\bigl( E(d)\otimes \mathcal{O}_X(-S)\bigr)\\$

$=  r\chi(\mathcal{O}_X)+\dfrac{1}{2}c_1\bigl(E(d)\otimes \mathcal{O}_X(-S)\bigr)\cdot\bigr(c_1\bigl(E(d)\otimes \mathcal{O}_X(-S)\bigr)-K_X\bigr) -c_2(E(d)\otimes \mathcal{O}_X(-S))$.\\

$= r\chi(\mathcal{O}_X) + \dfrac{1}{2}\bigl\{c_1(E(d))+rc_1(\mathcal{O}_X(-S))\bigr\}\bigl\{(c_1(E(d))-K_X)+rc_1(\mathcal{O}_X(-S))\bigr\}\\$

$\hspace{10mm} -c_2(E(d))-(r-1)c_1(E(d))\cdot c_1(\mathcal{O}_X(-S))-\binom{r}{2}c_1(\mathcal{O}_X(-S)\cdot c_1(\mathcal{O}_X(S)).$
\vspace{5mm}

Let $m=c_1(E(d))\cdot A$ . \\

Therefore, 
\vspace{2mm}

$\bigl\{c_1(E(d))\cdot A\bigr\}\bigl\{ h^0\bigl(E(d)\otimes \mathcal{O}_X(-S)\bigr)-r \bigr\} = \bigl\{ h^0\bigl(E(d)\otimes \mathcal{O}_X(-S)\bigr)-r \bigr\} m \\$

$= r\bigl(\chi(\mathcal{O}_X)-1\bigr)m + \dfrac{1}{2}\bigl[c_1(E(d))\cdot \bigl(c_1(E(d))-K_X\bigr)\bigr]m + \dfrac{r}{2}\bigl[c_1(E(d))\cdot c_1(\mathcal{O}_X(-S))\bigr]m \\$

$ \hspace{2mm} + \dfrac{r}{2}\bigl[(c_1(E(d))-K_X)\cdot c_1(\mathcal{O}_X(-S))\bigr]m + \dfrac{r^2}{2}\bigl[ c_1(\mathcal{O}_X(-S)\cdot c_1(\mathcal{O}_X(-S)\bigr]m$\\

$ \hspace{2mm} -c_2(E(d)m - (r-1)\bigl[ c_1(E(d))\cdot c_1(\mathcal{O}_X(-S))\bigr]m - \binom{r}{2}\bigl[c_1(\mathcal{O}_X(-S))\cdot c_1(\mathcal{O}_X(-S))\bigr]m.$

\hspace{3mm}

Similarly, $\bigl\{c_1\bigl(E(d)\otimes \mathcal{O}_X(-S)\bigr)\cdot A\bigr\}\bigl\{ h^0(E(d))-r  \bigr\}$\\

$=\bigl\{c_1(E(d))\cdot A + rc_1(\mathcal{O}_X(-S))\cdot A\bigr\}\bigl\{ r\bigl(\chi(\mathcal{O}_X)-1\bigr)+\dfrac{1}{2}c_1(E(d))\cdot (c_1(E(d))-K_X) - c_2(E(d))\bigr\}$\\

$=\bigl\{m + rc_1(\mathcal{O}_X(-S))\cdot A\bigr\}\bigl\{ r\bigl(\chi(\mathcal{O}_X)-1\bigr)+\dfrac{1}{2}c_1(E(d))\cdot (c_1(E(d))-K_X) - c_2(E(d))\bigr\}$\\

$=r(\chi(\mathcal{O}_X)-1)m + \dfrac{1}{2}\bigl[c_1(E(d))\cdot \bigl(c_1(E(d))-K_X\bigr)\bigr]m - c_2(E(d))m$ \\ 

$\hspace{5mm} 
 + \bigr\{rc_1(\mathcal{O}_X(-S))\cdot A \bigr\}\bigl\{ r\bigl(\chi(\mathcal{O}_X)-1\bigr)+\dfrac{1}{2}c_1(E(d))\cdot (c_1(E(d))-K_X) - c_2(E(d))\bigr\}$\\

\hspace{5mm}

Thus $\Bigr[\bigl\{c_1(E(d))\cdot A\bigr\}\bigl\{ h^0\bigl(E(d)\otimes \mathcal{O}_X(-S)\bigr)-r \bigr\}\Bigr] - \Bigr[\bigl\{c_1\bigl(E(d)\otimes \mathcal{O}_X(-S)\bigr)\cdot A\bigr\}\bigl\{ h^0(E(d))-r  \bigr\}\Bigr]$\\

\hspace{2mm}

$= \dfrac{r}{2}\bigl[c_1(E(d))\cdot c_1(\mathcal{O}_X(-S))\bigr]m + \dfrac{r}{2}\bigl[(c_1(E(d))-K_X)\cdot c_1(\mathcal{O}_X(-S))\bigr]m $\\

$+ \dfrac{r^2}{2}\bigl[ c_1(\mathcal{O}_X(-S)\cdot c_1(\mathcal{O}_X(-S)\bigr]m 
 - (r-1)\bigl[ c_1(E(d))\cdot c_1(\mathcal{O}_X(-S))\bigr]m - \binom{r}{2}\bigl[c_1(\mathcal{O}_X(-S))\cdot c_1(\mathcal{O}_X(-S))\bigr]m $\\

$ - \bigr\{rc_1(\mathcal{O}_X(-S))\cdot A \bigr\}\bigl\{ r(\chi(\mathcal{O}_X)-1)+\dfrac{1}{2}c_1(E(d))\cdot (c_1(E(d))-K_X) - c_2(E(d))\bigr\}.$\\

\hspace{2mm}

$ =  \dfrac{r}{2}\bigl[c_1(E(d))\cdot c_1(\mathcal{O}_X(-S))\bigr]m + \dfrac{r}{2}\bigl[(c_1(E(d))-K_X)\cdot c_1(\mathcal{O}_X(-S))\bigr]m $\\

$+ \dfrac{r^2}{2}\bigl[ c_1(\mathcal{O}_X(-S)\cdot c_1(\mathcal{O}_X(-S)\bigr]m 
 - (r-1)\bigl[ c_1(E(d))\cdot c_1(\mathcal{O}_X(-S))\bigr]m$\\
 
 $- \binom{r}{2}\bigl[c_1(\mathcal{O}_X(-S))\cdot c_1(\mathcal{O}_X(-S))\bigr]m  -r^2\bigl(\chi(\mathcal{O}_X)-1\bigr)\bigl\{c_1(\mathcal{O}_X(-S))\cdot A\bigr\}\\$
 
 $ - \dfrac{r}{2} c_1(E(d))\cdot \bigl(c_1(E(d))-K_X \bigr)\{c_1(\mathcal{O}_X(-S))\cdot A\bigr\} - rc_2(E(d))\{c_1(\mathcal{O}_X(-S))\cdot A\bigr\}.$

 \hspace{3mm}

 Now note that $$c_1(E(d))=c_1(E)+rc_1(\mathcal{O}_X(dD)) = \mathcal{O}_X\bigl((rd+1)D\bigr).$$

 Similarly,
 \vspace{2mm}

 $c_2(E(d)) = c_2(E)+(r-1)c_1(E)\cdot c_1(\mathcal{O}_X(dD))+\binom{r}{2}c_1(\mathcal{O}_X(dD)\cdot c_1(\mathcal{O}_X(dD))$\\

 $= c_2(E) + (r-1)d(D^2)+\binom{r}{2}d^2(D^2).$\\

 This implies that \\

 $\Bigr[\bigl\{c_1(E(d))\cdot A\bigr\}\bigl\{ h^0\bigl(E(d)\otimes \mathcal{O}_X(-S)\bigr)-r \bigr\}\Bigr] - \Bigr[\bigl\{c_1\bigl(E(d)\otimes \mathcal{O}_X(-S)\bigr)\cdot A\bigr\}\bigl\{ h^0(E(d))-r  \bigr\}\Bigr]$\\

 $=-r(rd+1)^2(D\cdot S)(D\cdot A)+\dfrac{r}{2}(rd+1)(K_X\cdot S)(D\cdot A) + \dfrac{r^2}{2}(rd+1)(S^2)(D\cdot A)$\\
 
 $+ (r-1)(rd+1)^2(D\cdot S)(D\cdot A) - \binom{r}{2}(rd+1)(S^2)(D\cdot A) +r^2\bigl(\chi(\mathcal{O}_X)-1\bigr)(S\cdot A) $\\
 
 $+ \dfrac{r}{2}(rd+1)^2(D^2)(S\cdot A) - \dfrac{r}{2}(rd+1)(D\cdot K_X)(S\cdot A)\\$
 
 $+  rc_2(E)(S\cdot A) + rd(r-1)(D^2)(S\cdot A) + r\binom{r}{2}d^2(D^2)(S\cdot A)\\$

 \vspace{2mm}

 $ = \Bigl[-(rd+1)^2(D\cdot S)(D\cdot A)+\dfrac{r}{2}(rd+1)(K_X\cdot S)(D\cdot A)+\dfrac{r}{2}(rd+1)(S^2)(D\cdot A)\\$
 
 $+r^2\bigl(\chi(\mathcal{O}_X)-1\bigr)(S\cdot A) + rc_2(E)(S\cdot A) + \dfrac{r}{2}\Bigl\{2r^2-r)d^2+(4r-2)d+1\Bigr\} (D^2)(S\cdot A)\\$ 
 
 $-\dfrac{r}{2}(rd+1)(D\cdot K_X)(S\cdot A)\Bigr]$\\

 \vspace{2mm}

 $=Ad^2+Bd+C$

 \vspace{2mm}

 where 
 
 \begin{align*}
     A & = -\,r^2 (D\cdot S)(D\cdot A)
\;+\;
\dfrac{r^2(2r-1)}{2}\,(D^2)(S\cdot A),\\
&= \dfrac{r^2}{2}\bigl[(2r-1)(D^2)(S\cdot A)  - 2(D\cdot A)(D\cdot S) \bigr]
 \end{align*}\\
\begin{align*}
B & = -\,2r\,(D\cdot S)(D\cdot A)
\;+\;
\frac{r^2}{2}\bigl((K_X\cdot S)+(S^2)\bigr)(D\cdot A)
\;+\;\\
&\hspace{2cm} r(2r-1)(D^2)(S\cdot A)
\;-\;
\frac{r^2}{2}(D\cdot K_X)(S\cdot A)\\
& =  r\Bigl[(2r-1)(S\cdot A) (D^2)-2(D\cdot A)(D\cdot S)\Bigr]+\dfrac{r^2}{2}\Bigl[\Bigl( (K_X\cdot S)+S^2\Bigr)(D\cdot A)-(D\cdot K_X)(S\cdot A)\Bigr]
\end{align*}

\vspace{2mm}
and 
\begin{align*}
C&= -\,(D\cdot S)(D\cdot A)
\;+\;
\dfrac{r}{2}(K_X\cdot S)(D\cdot A)
\;+\;
\dfrac{r}{2}(S^2)(D\cdot A)
\;+\;
r^2\bigl(\chi(\mathcal O_X)-1\bigr)(S\cdot A)
\;+\;\\
& \hspace{3cm} r\,c_2(E)(S\cdot A)
\;+\;
\dfrac{r}{2}(D^2)(S\cdot A)
\;-\;
\dfrac{r}{2}(D\cdot K_X)(S\cdot A),\\
&= \Bigl[\dfrac{r}{2}(S\cdot A)(D^2) -(D\cdot A)(D\cdot S)\Bigr]+\dfrac{r}{2}\Bigl[\Bigl(( K_X\cdot S)+S^2\Bigr)(D\cdot A)-(D\cdot K_X)(S\cdot A)\Bigr] + \\
&
\hspace{3cm} r^2\bigl( \chi(\mathcal{O}_X)-1\bigr) + r\,c_2(E)(S\cdot A)
\end{align*}

\vspace{5mm}

For $d\gg0$, the coefficient of $d^2$ will dominate provided $$r^2(r-\dfrac{1}{2})(D^2)\cdot (S\cdot A)-r^2(D\cdot S)(D\cdot A) > 0$$

i.e. when $$(2r-1)(D^2)(S\cdot A)  - 2(D\cdot A)(D\cdot S) >0.$$

In such case $$\mu_A\bigl(M_{E(d)\otimes \mathcal{O}_X(-S)}\bigr) > \mu_A\bigl(M_{E(d)}\bigr)$$ for $d\gg0$.

Also, if $A\geq 0$ and $d\geq -\dfrac{B}{C},$ we have $Ad^2+Bc+C > 0$ and thus, in this case also, $$\mu_A\bigl(M_{E(d)\otimes \mathcal{O}_X(-S)}\bigr) > \mu_A\bigl(M_{E(d)}\bigr).$$

The following result is inspired by  \cite[Proposition 8]{DHJS25}.
\begin{thm}
    Let $X$ be a smooth surface of Picard number at least 3 having an effective cone generated by curves $C_1,C_2,\cdots, C_m$ with mutual intersection multiplicity of at most 1. Then for every vector bundle $E$ with its determinant bundle $D=\det(E)$ ample, there exists an ample polarisation $A$ such that for $d\gg 0$ the syzygy bundle $M_{E(d)}$ is not stable with respect to $A$.
    \begin{proof}
        Let $j$ be index such that $D\cdot C_j \leq D\cdot C_i$ for every $1\leq i \leq m.$ Let 
        \begin{center}
       $t_0 = \max\{ t \mid D-tC_j$ is nef $\}$ 
       \end{center}
       and we set $A=D-t_0C_j.$ We then have that $A$ is on the boundary of the nef cone of $X$, and $A\cdot C_i=0$ for some generator $C_i$. By the hypothesis $C_i\cdot C_j \leq 1$ and $D$ ample. This shows that $D\cdot C_i=t_0.$ By the choice of $C_j,$ we then have $t_0 \geq D \cdot C_j.$ We can also assume that the curves $C_1.C_2,\cdots,C_m$ have negative self-intersection.

       Now letting $S=C_j$ in Theorem \ref{thm-main}(1), and by a similar argument as in \cite[Proposition 8]{DHJS25} we get  \begin{align*}
        (D^2)(S\cdot A)  - 2(D\cdot A)(D\cdot S)
       &=(C_j\cdot(D-t_0C_j))D^2-2(D\cdot(D-t_0C_j))(D\cdot E_j)\\
       &>-D^2(D\cdot C_j)-t_oC_j^2D^2\\
       &\geq -D^2(D\cdot C_j) + t_0D^2\\
       &=D^2(t_0-D\cdot C_j) \geq 0.
        \end{align*}
        As $A$ and $D$ are ample, and $S$ is a curve, we conclude 
        $$(2r-1)(D^2)(S\cdot A)  - 2(D\cdot A)(D\cdot S) \geq 0.$$
       
       Now, note that, $A$ is only nef, and not ample. However, replacing $A$ by $A' = A+\varepsilon C_j$ will be ample for $0< \varepsilon \leq 1$ and by continuity, we conclude $$(2r-1)(D^2)(S\cdot A')  - 2(D\cdot A')(D\cdot S) \geq 0$$
       for sufficiently small $\varepsilon$. 

       Now we choose $d$ sufficiently large so that $E(d)$ and $E(d)\otimes \mathcal{O}_X(-S)$ are globally generated. Then Lemma \ref{lem1} implies that $M_{E(d)\otimes \mathcal{O}_X(-S)}$ is a subbundle of $M_{E(d)}$, and by Theorem \ref{thm-main} (1), we conclude that $M_{E(d)\otimes \mathcal{O}_X(-S)}$ is a destabilizing subbundle of $M_{E(d)}$. This shows that $M_{E(d)}$ is unstable with respect to $A$.
        
    \end{proof}
\end{thm}

\end{document}